\DeclareFontFamily{U}{euf}{}
\DeclareFontShape{U}{euf}{m}{n}{%
  <5><6><7><8><9>gen*eufm%
  <10><10.95><12><14.4><17.28><20.74><24.88>eufm10%
  }{}
\DeclareFontShape{U}{euf}{b}{n}{%
  <5><6><7><8><9>gen*eufb%
  <10><10.95><12><14.4><17.28><20.74><24.88>eufb10%
  }{}
\DeclareFontFamily{U}{msb}{}
\DeclareFontShape{U}{msb}{m}{n}{%
  <5><6><7><8><9>gen*msbm%
  <10><10.95><12><14.4><17.28><20.74><24.88>msbm10%
  }{}
\DeclareFontFamily{U}{msa}{}
\DeclareFontShape{U}{msa}{m}{n}{%
  <5><6><7><8><9>gen*msam%
  <10><10.95><12><14.4><17.28><20.74><24.88>msam10%
  }{}
\newtheorem{theorem}{Theorem}[section]
\newtheorem{lemma}[theorem]{Lemma}
\newtheorem{proposition}[theorem]{Proposition}
\newtheorem{corollary}[theorem]{Corollary}
\theoremstyle{definition}
\theoremstyle{remark}
\numberwithin{equation}{section} \frenchspacing
\def\Re{{\rm Re}}
\begin{document}

\title[]
{Jackson's integral of multiple Hurwitz-Lerch zeta functions and multiple gamma functions}

\begin{abstract}
Using the Jackson integral, we obtain the $q$-integral analogue  of  Raabe's  1843  formula for Barnes multiple Hurwitz-Lerch zeta functions and Barnes and Vardi's multiple gamma functions.

Our results generalize $q$-integral analogue of the  Raabe type formulas for the Hurwitz zeta functions and log gamma functions in [N. Kurokawa, K. Mimachi, and M. Wakayama,
Jackson's integral of the Hurwitz zeta function, Rend. Circ. Mat. Palermo (2) 56 (2007), no. 1, 43--56]. During the proof we also obtain a new formula on the relationship between the higher and lower orders  Hurwitz zeta functions.
\end{abstract}

\author{Su Hu}
\address{Department of Mathematics, South China University of Technology, Guangzhou, Guangdong 510640, China}
\email{mahusu@scut.edu.cn}

\author{Daeyeoul Kim}
\address{Department of Mathematics and Institute of Pure and Applied Mathematics, Chon-buk National University, 567 Baekje-daero, deokjin-gu, Jeonju-si,
Jeollabuk-do 54896, Republic of Korea}
\email{kdaeyeoul@jbnu.ac.kr}

\author{Min-Soo Kim}
\address{Division of Mathematics, Science, and Computers, Kyungnam University, 7(Woryeong-dong) kyungnamdaehak-ro, Masanhappo-gu, Changwon-si,
Gyeongsangnam-do 51767, Republic of Korea}
\email{mskim@kyungnam.ac.kr}

\subjclass[2000]{11M35, 33B15}
\keywords{Jackson  integral,  Raabe type formula, Multiple Hurwitz-Lerch zeta function, Barnes gamma function}


\maketitle

\vspace*{1ex}

\def\C{\mathbb C_p}
\def\BZ{\mathbb Z}
\def\Z{\mathbb Z_p}
\def\Q{\mathbb Q_p}
\def\nu{{\rm ord}}

\section{Introduction}\label{intro}
Raabe's 1843 formula for Gamma function is as follows:
\begin{equation}\label{raab1}
\int_{0}^{1}\log\Big(\frac{\Gamma(a+t)}{\sqrt{2\pi}}\Big)dt=a\log a-a, \quad a\geq 0
\end{equation}
(see \cite[p. 89]{Nielsen}; see also \cite[p. 29, Eq. (40) and Eq. (41)]{SC}).

The Hurwitz zeta function $\zeta(s,a)$ is
defined by
\begin{equation}\label{zeta1}
\zeta(s,a)=\sum_{n=0}^\infty\frac{1}{(n+a)^s}
\end{equation}
for $\Re(s)>1$ and $a\in\mathbb C\setminus\mathbb Z_0^-.$
Here $\mathbb Z_0^-=\{0,-1,-2,\ldots\}$ and $\mathbb C$ is the set of complex numbers.
It also satisfies the Raabe type formula:
\begin{equation}\label{zeta2}
\int_{0}^{1}\zeta(s,a+t)dt=\frac{a^{1-s}}{s-1}.
\end{equation}

The gamma function appears in the derivative of the Hurwitz zeta function as indicated by the following Lerch's formula \cite{Lerch} (see also \cite[p. 159, Eq. (17)]{SC}):
\begin{equation*}
\frac{\partial}{\partial s} \zeta(s,a)\Big |_{s=0}  =\log \frac{\Gamma(a)}{\sqrt{2\pi}}.
\end{equation*}

The Jackson integral is, when $q>1,$ defined as
\begin{equation}\label{j-int}
\int_0^1 f(a) d_qa=(q-1)\sum_{n=1}^\infty f(q^{-n})q^{-n}
\end{equation}(see \cite{Jack}).
Using (\ref{j-int}), Kurokawa, Mimachi and Wakayama  proved  $q$-integral analogue of the  Raabe type formulas for the Hurwitz zeta functions $\zeta(s,a)$ and log Gamma functions $\log \Gamma(a)$, respectively. (See \cite[Theorems 2 and 3]{KMW}).

The multiple (or, simply, $k$-tuple) Hurwitz-Lerch zeta function $\Phi_k(z,s,a)$ is defined by
\begin{equation}\label{m-g-zeta}
\Phi_k(z,s,a)=\sum_{m_1,\ldots,m_k=0}^\infty\frac{z^{m_1+\dots+m_k}}{(m_1+\dots+m_k+a)^{s}}
\end{equation}
for $a\in\mathbb C\setminus\mathbb Z_0^-, s\in\mathbb C$ when $|z|<1$ and $\Re(s)>k$ when $|z|=1$ (see \cite[(6)]{CJS}).
It is analytically continued to $s\in\mathbb C$ as a meromorphic function by the usual method (see \cite{Be}) and
holomorphic at $s\in\mathbb C\setminus\{1,\ldots,k\}$ when $|z|=1.$
In its special case when $z = 1,$ the definition (\ref{m-g-zeta}) yields the familiar multiple (or, simply, $k$-tuple)
Hurwitz zeta function $\zeta_k(s,a)=\Phi_k(1,s,a)$:
\begin{equation}\label{m-zeta_z=1}
\zeta_k(s,a)=\sum_{m_1,\ldots,m_k=0}^\infty\frac{1}{(m_1+\dots+m_k+a)^{s}},
\end{equation}
where $\Re(s)>k$ and $a\in\mathbb C\setminus\mathbb Z_0^-.$

Letting $k=1$ in (\ref{m-g-zeta}),  we obtain the Hurwitz-Lerch zeta function $$\Phi_1(z,s,a)=\Phi(z,s,a).$$
Furthermore, when $k=1$ and $z=1,$  (\ref{m-g-zeta}) also  yields the Hurwitz  zeta function
$\Phi_1(1,s,a)=\zeta(s,a).$
In addition, letting $k=1$ and $a=1$ in (\ref{m-zeta_z=1}), the Riemann zeta function $\zeta_1(s,1)=\zeta(s)$ is obtained (for details, see \cite{Ap,Di,SC,WZ}).

We remark here that the general form of the multiple Hurwitz zeta function $\zeta_k(s,a)$ was studied systematically
by Barnes \cite{Ba} during his research on the theory of the multiple Gamma function $\Gamma_k(a).$
In 1977, Shintani~\cite{Shintani} evaluated the special values of certain $L$-functions attached to real quadratic number fields in terms of  Barnes's multiple Hurwitz zeta functions,
and the work of Barnes has also been applied  in the study of the determinants of the Laplacians around the middle of 1980s (see \cite{Var}).

In this paper, by using the Jackson  integral, we extend Kurokawa, Mimachi and Wakayama's $q$-integral analogue  of the Raabe type formula for the Hurwitz zeta functions and log gamma functions to the multiple case. (See Theorems \ref{thm2} and \ref{thm-gamma} below).

Our paper is organized as follows. In Sec. 2, we shall recall Barnes and Vardi's definitions of multiple gamma functions. In Sec. 3 and Sec. 4, we shall state and prove Theorems \ref{thm2} and Theorem \ref{thm-gamma}, respectively.

\section{Multiple gamma functions}

The multiple gamma function $\Gamma_k(a)$ by the following recurrence-functional equation
(for references and a short historical survey see \cite{Ba,SC,Var}):
\begin{equation}\label{mul-gam-def}
\begin{aligned}
\Gamma_k(a+1)&=\frac{\Gamma_{k+1}(a)}{\Gamma_k(a)},\quad a\in\mathbb C,\; k\in\mathbb N, \\
\Gamma_1(a)&=\Gamma(a), \\
\Gamma_k(1)&=1.
\end{aligned}
\end{equation}
The multiple gamma functions were first defined in a slight different form by Barnes \cite{Ba}.
The definition (\ref{mul-gam-def}) is due to Vardi \cite{Var} and seems more natural than that of Barnes'.
Define
\begin{equation}\label{gam-def}
G_k(a)=\exp\left( {\zeta_k'(0,a)} \right),
\end{equation}
where
$$\zeta_k'(s,a)=\frac{\partial}{\partial s}\zeta_k(s,a).$$

\begin{proposition}[{Vardi \cite[Proposition 2.3]{Var}}] \label{m-gama-zet}
The multiple gamma function $\Gamma_k(a)$ may be expressed by
$$
\Gamma_k(a)=\left[\prod_{r=1}^k R_{k-r+1}^{(-1)^r\binom{a}{r-1}}\right] G_k(a),
$$
where $$R_m=\exp\left(\sum_{r=1}^m\zeta_r'(0,1)\right)\quad\text{with}\quad R_0=1.$$
\end{proposition}

In particular, the special cases of Proposition \ref{m-gama-zet} when $k=1$ and $k=2$ give other forms of
the simple and double gamma functions $\Gamma_1(a)=\Gamma(a)$ and $\Gamma_2(a)$, respectively:
$$\Gamma(a)=R_1^{-1}G_1(a)=\exp(-\zeta'(0)+\zeta'(0,a))=\sqrt{2\pi}\exp(\zeta'(0,a)),$$
where $\zeta(s)=\zeta(s,1)$ is the Riemann zeta function and
$$\begin{aligned}
\Gamma_2(a)&=R_2^{-1}R_1^aG_2(a) \\
&=\exp(-\zeta'(0)-\zeta_2'(0,1)+a\zeta'(0))\exp(\zeta_2'(0,a)) \\
&=\exp\left(\frac1{12}-\zeta'(-1)\right)\exp(-(1-a)\zeta'(0)) \exp\left(-\frac1{12}+\zeta_2'(0,a)\right)\\
&=A(2\pi)^{\frac12-\frac{a}2}\exp\left(-\frac1{12}+\zeta_2'(0,a)\right),
\end{aligned}$$
where we have used $\zeta_2(s,a)=(1-a)\zeta(s,a)+\zeta(s-1,a),\zeta'(0)=-\log\sqrt{2\pi}$ and
the known identity
$$\log A =\frac1{12}-\zeta'(-1),$$
in which $A$ is the Kinkelin's constant (see \cite[p.~496, p.~499 and Proposition 4.6]{Var}).

The Generalized Stieltjes constants are denoted as $\gamma_n(a)$ and arise when considering the Laurent expansion of the Hurwitz zeta
function $\zeta(s,a)$ at simple pole $s=1.$
For $0< a\leq1$ and for all complex $s\neq1,$ we have
\begin{equation}\label{st-la}
\zeta(s,a)=\frac1{s-1}+\sum_{n=0}^\infty\frac{(-1)^n}{n!}\gamma_n(a)(s-1)^n.
\end{equation}
(See \cite[Theorem 1]{Be}). The Euler constant $\gamma$ is a particular case of the Stieltjes constants, that is, $\gamma_0(1)=\gamma,$ and the Euler constant is expressed as
$$\gamma=\lim_{n\to\infty}\left(\sum_{i=1}^n\frac 1i-\log n\right)\approx 0.57721\ldots.$$

\section{Jackson's integral of multiple Hurwitz-Lerch zeta functions}

\subsection{Results} The multiple Lipschitz-Lerch zeta functions $\text{Li}_k(z,s)$ will be involved in our formula, which is  defined as
\begin{equation}\label{m-g-li}
\text{Li}_k(z,s)=\sum_{m_1,\ldots,m_k=1}^\infty\frac{z^{m_1+\dots+m_k}}{(m_1+\dots+m_k)^{s}},
\end{equation}
where $s\in\mathbb C$ when $|z|<1$ and $\Re(s)>k$ when $|z|=1.$
When $k=1$ in (\ref{m-g-li}), one knows that
\begin{equation}\label{pol-log}
\text{Li}_1(z,s)=\text{Li}_s(z)=\sum_{m=1}^\infty\frac{z^{m}}{m^{s}},
\end{equation}
which is the polylogarithm. Here $s$ is usually  called the order. For each fixed $s\in\mathbb C,$ the series (\ref{pol-log}) defines an
analytic function of $z$ for $|z|<1;$ in particular, $\text{Li}_0(z)=z/(1-z)$ and $\text{Li}_1(z)=-\log(1-z).$
Letting $z=1$ in (\ref{pol-log}), the Riemann zeta function $\text{Li}_s(1)=\zeta(s)$ is obtained.
It is easy to see that $\text{Li}_k(1,s)$ is expressed as the multiple Hurwitz zeta function
\begin{equation}\label{li-zeta}
\text{Li}_k(1,s)=\zeta_k(s,k).
\end{equation}
Denote by
$$[n]_{q}=\frac{q^{n}-1}{q-1}.$$
Our main result in this section is as follows:

\begin{theorem}\label{thm2}
Let $k$ be an positive integer.
Then
\begin{equation}\label{ra1}
\int_0^1 \Phi_k(z,s,a)d_qa=
\frac{1}{[1-s]_q}+\sum_{r=0}^{k-1}\binom kr \sum_{l=0}^\infty\binom{-s}{l}\frac{{\rm Li}_{k-r}(z,s+l)}{[l+1]_q}
\end{equation}
is established in the region $|z|\leq1$ and $\Re(s)<1.$
And
\begin{equation}\label{ra2}
\int_0^1 \Phi_k(z,s,k-a)d_qa=
z^{-k}\sum_{l=0}^\infty(-1)^l\binom{-s}{l}\frac{{\rm Li}_{k}(z,s+l)}{[l+1]_q},
\end{equation}
\begin{equation}\label{ra3}
\int_0^1 \Phi_k(z,s,k+a)d_qa=
z^{-k}\sum_{l=0}^\infty\binom{-s}{l}\frac{{\rm Li}_{k}(z,s+l)}{[l+1]_q}
\end{equation}
are established in the region
$s\in\mathbb C$ when $|z|<1$ and $s\in\mathbb C\setminus\{1,\ldots,k\}$ when $|z|=1.$
\end{theorem}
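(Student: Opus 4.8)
The plan is to reduce each $q$-integral in (\ref{ra1}), (\ref{ra2}) and (\ref{ra3}) to the single elementary evaluation $\int_0^1 a^{\mu}\,d_qa=1/[\mu+1]_q$, valid for $\Re(\mu)>-1$ when $q>1$, which follows at once from (\ref{j-int}) by summing the geometric series $(q-1)\sum_{n\ge1}q^{-n(\mu+1)}$. In particular $\int_0^1 a^{-s}\,d_qa=1/[1-s]_q$ precisely when $\Re(s)<1$, and this is the source both of the isolated term $1/[1-s]_q$ and of the hypothesis $\Re(s)<1$ in (\ref{ra1}). The passage from $\Phi_k$ to such monomial $q$-integrals proceeds by: (a) interchanging the Jackson integral with the defining multiple sum (\ref{m-g-zeta}) of $\Phi_k$; (b) expanding each interior integrand in a binomial series in the variable $a$; (c) integrating termwise; and (d) reassembling the surviving multiple sum in terms of the functions ${\rm Li}_{k-r}(z,\cdot)$ of (\ref{m-g-li}).

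For (\ref{ra1}): after step (a) one obtains $\sum_{m_1,\dots,m_k\ge0}z^{M}\int_0^1(M+a)^{-s}\,d_qa$ with $M=m_1+\cdots+m_k$. The lone point $M=0$ contributes $\int_0^1 a^{-s}\,d_qa=1/[1-s]_q$. For $M\ge1$ and $a=q^{-n}\in(0,1)$ one has $0<a/M<1$, so $(M+a)^{-s}=\sum_{l\ge0}\binom{-s}{l}M^{-s-l}a^{l}$ converges and integrates termwise to $\sum_{l\ge0}\binom{-s}{l}M^{-s-l}/[l+1]_q$. It then remains to evaluate $\sum_{M\ge1}c_M(k)\,z^{M}M^{-s-l}$, where $c_M(k)$ is the number of $(m_1,\dots,m_k)\in\mathbb Z_{\ge0}^k$ with $\sum m_i=M$. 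Sorting these compositions by the number $r\in\{0,\dots,k-1\}$ of indices with $m_i=0$, the $\binom kr$ choices of the zero coordinates and the sum over the remaining $k-r$ (now strictly positive) parts contribute exactly $\binom kr\,{\rm Li}_{k-r}(z,s+l)$; collecting everything gives (\ref{ra1}). (This composition-splitting step is the relation between higher- and lower-order zeta functions alluded to in the abstract.)

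For (\ref{ra2}) and (\ref{ra3}) the first move is the index shift $m_i\mapsto m_i+1$, which rewrites $\Phi_k(z,s,k\mp a)$ as $z^{-k}\sum_{m_1,\dots,m_k\ge1}z^{M}(M\mp a)^{-s}$ with now $M\ge k\ge1$; hence no point with vanishing argument occurs, there is no $1/[1-s]_q$ term, and on $|z|=1$ the wider range $\Re(s)>k$ is admissible (the resulting $l$-series involves only ${\rm Li}_k(z,s+l)$ with $\Re(s+l)>k$). After interchanging the two sums, the binomial expansion of $(M\mp a)^{-s}$ in powers of $a$ followed by termwise integration produces the inner $l$-series with the signs dictated by the shift, while $\sum_{m_i\ge1}z^{M}M^{-s-l}$ is by definition ${\rm Li}_k(z,s+l)$; this yields (\ref{ra2}) and (\ref{ra3}).

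Steps (b)--(d) are routine computations. The two points that genuinely need care are: (i) justifying all interchanges of the nested summations (the multiple sum of $\Phi_k$, the sum defining the $q$-integral, and the binomial series), which one does by absolute convergence, using $|z|<1$ so that $\sum_{M}c_M(k)\,|z|^{M}$ dominates any fixed power of $M$, the geometric decay in $n$ of $q^{-n}$, the geometric decay in $l$ of $1/[l+1]_q=(q-1)/(q^{l+1}-1)$, and, for (\ref{ra1}), the hypothesis $\Re(s)<1$ to control the $M=0$ contribution $\sum_{n}q^{-n}q^{n\Re(s)}$ — together with checking the analogous estimates on $|z|=1$, $\Re(s)>k$ for (\ref{ra2})--(\ref{ra3}); and (ii) the regrouping of compositions by number of zero parts that produces the coefficients $\binom kr$ in (\ref{ra1}). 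I expect (i), and in particular keeping the three regions of validity straight, to be the main obstacle; the remainder is bookkeeping.
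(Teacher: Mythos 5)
Your route is essentially the paper's. Your step (d), sorting the compositions of $M$ by the number $r$ of zero coordinates to produce the coefficients $\binom kr$, is exactly the paper's Theorem \ref{thm1} (proved there by the same symmetry argument), and your steps (b)--(c), expanding $(M\pm a)^{-s}$ binomially and summing the geometric series in $n$ coming from $a=q^{-n}$ to get the factors $1/[l+1]_q$, are exactly the computation in the paper's Lemma \ref{thm2-lem} and in its direct treatment of (\ref{ra2}). You merely fold the two lemmas into one computation (which, incidentally, avoids the cancelling factors $z^{k-r}$ and $z^{-(k-r)}$ that appear in the paper), and you are more explicit than the paper about the interchanges of summation; for (\ref{ra1}) and (\ref{ra2}) your argument is correct.

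The one genuine problem is (\ref{ra3}). Your own description gives it away: after the shift, $\Phi_k(z,s,k+a)$ becomes $z^{-k}\sum_{m_i\ge1}z^{M}(M+a)^{-s}$, and the binomial expansion of $(M+a)^{-s}$ in powers of $a$ has \emph{no} alternating sign, so termwise integration yields $z^{-k}\sum_{l\ge0}\binom{-s}{l}\,{\rm Li}_k(z,s+l)/[l+1]_q$, i.e.\ the formula \emph{without} the factor $(-1)^l$. This does not match the displayed (\ref{ra3}), so your claim that the computation ``yields (\ref{ra2}) and (\ref{ra3})'' is not right as written: the sign dictated by the shift is $+$ in the $k+a$ case, and generically $\int_0^1\Phi_k(z,s,k+a)\,d_qa\neq\int_0^1\Phi_k(z,s,k-a)\,d_qa$. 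To be fair, the paper has the same defect: its Lemma \ref{thm2-lem} with $r=0$ already gives the sign-free evaluation of $\int_0^1\Phi_k(z,s,a+k)\,d_qa$, contradicting the printed (\ref{ra3}), and the remark ``the same procedure proves (\ref{ra3})'' likewise produces the sign-free version; the $(-1)^l$ in (\ref{ra3}) is evidently a misprint in the statement. But a correct writeup on your part should have carried the computation through and flagged that the identity you actually obtain for $k+a$ differs from the one displayed, rather than asserting agreement.
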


Letting $k=1$ and $z=1$ in Theorem \ref{thm2}, we immediately obtain the following result.

\begin{corollary}[{Kurokawa, Mimachi and Wakayama \cite[Theorem 2 and (3.1)]{KMW}}]\label{KMW}
We have
\begin{equation}\label{ra1w}
\int_0^1\zeta(s,a)d_qa=
\frac{1}{[1-s]_q}+\sum_{l=0}^\infty\binom{-s}{l}\frac{\zeta(s+l)}{[l+1]_q}
\end{equation}
for $\Re(s) < 1$, and
\begin{equation}\label{ra2w}
\int_0^1 \zeta(s,1-a)d_qa=
\sum_{l=0}^\infty(-1)^l\binom{-s}{l}\frac{\zeta(s+l)}{[l+1]_q},
\end{equation}
\begin{equation}\label{ra3w}
\int_0^1 \zeta(s,1+a)d_qa=
\sum_{l=0}^\infty\binom{-s}{l}\frac{\zeta(s+l)}{[l+1]_q}
\end{equation}
for all $s\in\mathbb{C}\setminus\{1\}.$
\end{corollary}

During the proof Theorem \ref{thm2}, we also obtain  the following result on the relationship between the higher and lower orders  Hurwitz zeta functions.

\begin{lemma}\label{thm1}
Let $\Phi_k(z,s,a)$ be the multiple Hurwitz-Lerch zeta function. Then
\begin{equation}
\Phi_k(z,s,a)=\frac1{a^s}
+\sum_{r=0}^{k-1}\binom{k}r z^{k-r}\Phi_{k-r}(z,s,a+(k-r))
\end{equation} is established in the region
$s\in\mathbb C$ when $|z|<1$ and $s\in\mathbb C\setminus\{1,\ldots,k\}$ when $|z|=1.$
\end{lemma}

 \subsection{Proof of Lemma \ref{thm1}}\label{pf-thm1}

First of all, letting  $k=2$ in (\ref{m-g-zeta}), we have
\begin{equation}\label{ex-2}
\begin{aligned}
\Phi_2(z,s,a)
&=\sum_{m_1,m_2=0}^\infty\frac{z^{m_1+m_2}}{(m_1+m_2+a)^{s}} \\
&=\sum_{m_2=1}^\infty\sum_{m_3=0}^\infty\frac{z^{m_2+m_3}}{(m_2+m_3+a)^{s}}
+\sum_{m_3=0}^\infty\frac{z^{m_3}}{(m_3+a)^{s}}.
\end{aligned}
\end{equation}
Thus
\begin{equation}\label{ex-3}
\begin{aligned}
\Phi_2(z,s,a)
&=\sum_{m_2,m_3=1}^\infty\frac{z^{m_2+m_3}}{(m_2+m_3+a)^{s}}
+\sum_{m_2=1}^\infty \frac{z^{m_2}}{(m_2+a)^{s}} \\
&\quad+\sum_{m_3=1}^\infty \frac{z^{m_3}}{(m_3+a)^{s}} +\frac1{a^s}.
\end{aligned}
\end{equation}
Letting $k=3$ in (\ref{m-g-zeta}), we have the following  decomposition:
\begin{equation}\label{ex-1}
\begin{aligned}
\Phi_3(z,s,a)
&=\sum_{m_1,m_2,m_3=0}^\infty\frac{z^{m_1+m_2+m_3}}{(m_1+m_2+m_3+a)^{s}} \\
&=\sum_{m_1=1}^\infty\sum_{m_2,m_3=0}^\infty\frac{z^{m_1+m_2+m_3}}{(m_1+m_2+m_3+a)^{s}}
+\Phi_2(z,s,a).
\end{aligned}
\end{equation}
We also have
\begin{equation}\label{sep-2}
\begin{aligned}
\sum_{m_1=1}^\infty\sum_{m_2,m_3=0}^\infty&\frac{z^{m_1+m_2+m_3}}{(m_1+m_2+m_3+a)^{s}}
=\sum_{m_1,m_2=1}^\infty\sum_{m_3=0}^\infty\frac{z^{m_1+m_2+m_3}}{(m_1+m_2+m_3+a)^{s}} \\
&\quad
+\sum_{m_1=1}^\infty\sum_{m_3=0}^\infty\frac{z^{m_1+m_3}}{(m_1+m_3+a)^{s}},
\end{aligned}
\end{equation}
\begin{equation}\label{sep-3}
\begin{aligned}
\sum_{m_1,m_2=1}^\infty\sum_{m_3=0}^\infty&\frac{z^{m_1+m_2+m_3}}{(m_1+m_2+m_3+a)^{s}}
=\sum_{m_1,m_2,m_3=1}^\infty\frac{z^{m_1+m_2+m_3}}{(m_1+m_2+m_3+a)^{s}} \\
&\quad
+\sum_{m_1,m_2=1}^\infty\frac{z^{m_1+m_2}}{(m_1+m_2+a)^{s}}
\end{aligned}
\end{equation}
and
\begin{equation}\label{sep-4}
\sum_{m_1=1}^\infty\sum_{m_3=0}^\infty\frac{z^{m_1+m_3}}{(m_1+m_3+a)^{s}}
=\sum_{m_1,m_3=1}^\infty\frac{z^{m_1+m_3}}{(m_1+m_3+a)^{s}}
+\sum_{m_1=1}^\infty\frac{z^{m_1}}{(m_1+a)^{s}}.
\end{equation}
In view of (\ref{ex-3}), (\ref{ex-1}), (\ref{sep-2}), (\ref{sep-3}) and (\ref{sep-4}), we get
\begin{equation}\label{ex-4}
\begin{aligned}
\Phi_3(z,s,a)
&=\frac1{a^s}+\binom32\sum_{m_2=1}^\infty \frac{z^{m_2}}{(m_2+a)^{s}} \\
&\quad
+\binom31\sum_{m_2,m_3=1}^\infty\frac{z^{m_2+m_3}}{(m_2+m_3+a)^{s}} \\
&\quad
+\binom30\sum_{m_1,m_2,m_3=1}^\infty\frac{z^{m_1+m_2+m_3}}{(m_1+m_2+m_3+a)^{s}}.
\end{aligned}
\end{equation}
In what follows, we use the above reasoning (the symmetry of the multiple sum
$\sum_{m_{1},\ldots,m_{k}=0}$ in the $m_{j}$'s variables) to prove an analogue of  (\ref{ex-4}) for general $k$.
Denote by
$$S=\{m_{1},\ldots,m_k\}.$$
Then
\begin{equation}\label{ex-5-0}
\begin{aligned}
\Phi_k(z,s,a)&=\sum_{m_1,\ldots,m_k=0}^\infty\frac{z^{m_1+\dots+m_k}}{(m_1+\dots+m_k+a)^{s}} \\
&=\frac1{a^s}+\sum_{r=0}^{k-1}\sum_{\{m_{i_1},\ldots,m_{i_{k-r}}\}\subset S}
\left[\sum_{m_{i_1},\ldots,m_{i_{k-r}}=1}^\infty
\frac{z^{m_{i_1}+\cdots+m_{i_{k-r}}}}{(m_{i_1}+\cdots+m_{i_{k-r}}+a)^{s}}\right].
\end{aligned}
\end{equation}
Since
$$\sum_{m_{i_1},\ldots,m_{i_{k-r}}=1}^\infty
\frac{z^{m_{i_1}+\cdots+m_{i_{k-r}}}}{(m_{i_1}+\cdots+m_{i_{k-r}}+a)^{s}}
=\sum_{m_{r+1},\ldots,m_k=1}^\infty
\frac{z^{m_{r+1}+\cdots+m_k}}{(m_{r+1}+\cdots+m_k+a)^{s}},$$
from (\ref{ex-5-0}), we have
\begin{equation}\label{ex-5}
\Phi_k(z,s,a)
=\frac1{a^s}
+\sum_{r=0}^{k-1}\binom{k}r\sum_{m_{r+1},\ldots,m_k=1}^\infty\frac{z^{m_{r+1}+\dots+m_k}}{(m_{r+1}+\dots+m_k+a)^{s}}.
\end{equation}
From (\ref{ex-5}), we get
\begin{equation}\label{ex-6}
\begin{aligned}
\Phi_k(z,s,a)
&=\frac1{a^s}+\sum_{r=0}^{k-1}\binom{k}r z^{k-r}\\
&\quad\times
\sum_{m_{r+1},\ldots,m_k=1}^\infty\frac{z^{(m_{r+1}-1)+\dots+(m_k-1)}}{((m_{r+1}-1)+\dots+(m_k-1)+a+(k-r))^{s}}\\
&=\frac1{a^s}+\sum_{r=0}^{k-1}\binom{k}r z^{k-r}
\Phi_{k-r}(z,s,a+(k-r))
\end{aligned}
\end{equation}
for $s\in\mathbb C$ when $|z|<1$ and $\Re(s)>k$ when $|z|=1,$ and analytically continued to $s\in\mathbb C\setminus\{1,\ldots,k\}$ if $|z|=1.$
This completes the proof.
\subsection{Proof of Theorem \ref{thm2}}\label{pf-thm2}

To prove Theorem \ref{thm2}, we need the following lemma.
\begin{lemma}\label{thm2-lem}
Let $k$ be an positive integer and $\Phi_k(z,s,a)$ be the multiple Hurwitz-Lerch zeta function.  Then
$$
\int_0^1\Phi_{k-r}(z,s,a+(k-r))d_qa
=z^{-(k-r)}\sum_{l=0}^\infty\binom{-s}{l}\frac{{\rm Li}_{k-r}(z,s+l)}{[l+1]_q}
$$
is established in the region $s\in\mathbb C$ when $|z|<1$ and $s\in\mathbb C\setminus\{1,\ldots,k\}$ when $|z|=1.$
\end{lemma}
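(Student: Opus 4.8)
The plan is to compute the Jackson integral directly from its definition \eqref{j-int}, substituting the series expression for $\Phi_{k-r}$. First I would write
\[
\int_0^1\Phi_{k-r}(z,s,a+(k-r))\,d_qa=(q-1)\sum_{n=1}^\infty q^{-n}\sum_{m_1,\dots,m_{k-r}=0}^\infty\frac{z^{m_1+\dots+m_{k-r}}}{(m_1+\dots+m_{k-r}+q^{-n}+(k-r))^{s}},
\]
and then shift the summation indices $m_j\mapsto m_j-1$ as in the proof of Theorem \ref{thm1}, so that the inner sum becomes $z^{-(k-r)}\sum_{m_1,\dots,m_{k-r}=1}^\infty z^{m_1+\dots+m_{k-r}}(m_1+\dots+m_{k-r}+q^{-n})^{-s}$. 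Writing $M=m_1+\dots+m_{k-r}\ge k-r\ge 1$, the factor $(M+q^{-n})^{-s}$ should be expanded by the binomial series $(M+q^{-n})^{-s}=M^{-s}\sum_{l=0}^\infty\binom{-s}{l}(q^{-n}/M)^{l}=\sum_{l=0}^\infty\binom{-s}{l}q^{-nl}M^{-s-l}$, which converges since $q^{-n}<1\le M$.

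Next I would interchange the order of the three summations (over $n$, over the $m_j$'s, and over $l$) — this is the step that needs the analyticity/convergence hypotheses, and I would justify it by absolute convergence using $|z|<1$ (or $\Re(s)>k$ when $|z|=1$, which guarantees $\sum_M (\#\{m_j\ge1: \sum m_j=M\}) M^{-\Re(s)}$ converges after bounding the binomial tail). After the interchange, the sum over the $m_j$'s of $z^{M}M^{-s-l}$ is exactly $\mathrm{Li}_{k-r}(z,s+l)$ by definition \eqref{m-g-li}, and the remaining sum over $n$ is
\[
(q-1)\sum_{n=1}^\infty q^{-n}q^{-nl}=(q-1)\sum_{n=1}^\infty q^{-n(l+1)}=(q-1)\frac{q^{-(l+1)}}{1-q^{-(l+1)}}=\frac{q-1}{q^{l+1}-1}=\frac1{[l+1]_q}.
\]
Collecting the $z^{-(k-r)}$ prefactor and the factor $\binom{-s}{l}$ then yields the claimed identity.

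The main obstacle will be the rigorous justification of interchanging the $l$-summation (the binomial expansion) with the $n$-summation and the $(k-r)$-fold $m$-summation; one must check that $\sum_{n,l,\mathbf m}q^{-n}|z|^{M}\bigl|\binom{-s}{l}\bigr|q^{-nl}M^{-\Re(s)-l}$ is finite. The inner geometric factor $\sum_n q^{-n(l+1)}=\frac1{[l+1]_q}\cdot\frac1{q-1}$ is bounded uniformly in $l$ by a constant, and $\sum_l\bigl|\binom{-s}{l}\bigr|M^{-l}$ converges for $M\ge 1$ with a bound that is summable against $|z|^{M}M^{-\Re(s)}$ when $|z|<1$; when $|z|=1$ one instead uses $M\ge k-r$ together with $\Re(s)>k$ to control the $l=0$ term and the geometric decay in $l$ from $M\ge k-r$ when $k-r\ge 2$ (for $k-r=1$ one needs $M\ge1$ and the strict inequality $|z|=1,\Re(s)>k\ge1$ suffices since the $l$-tail is dominated by a convergent series independent of $M$ once $M\ge1$). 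Everything else is the routine index shift and the evaluation of the geometric series giving $1/[l+1]_q$, so I would present those briskly and devote the care to the Fubini step.
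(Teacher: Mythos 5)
Your proposal is correct and follows essentially the same route as the paper's proof: expand $\Phi_{k-r}$ under the Jackson integral, shift indices to pull out $z^{-(k-r)}$, apply the binomial series to $(M+q^{-n})^{-s}$, interchange the sums, and evaluate the geometric series in $n$ to get $1/[l+1]_q$. The only difference is that you spell out the Fubini/absolute-convergence justification (where note that the summed $n$-factor $1/(q^{l+1}-1)$ actually decays geometrically in $l$, which is what handles the $M=1$, $|z|=1$ case), a step the paper passes over silently.
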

\begin{proof}
Using (\ref{j-int}) and (\ref{m-g-li}), we obtain the following equality
$$
\begin{aligned}
\int_0^1\Phi_{k-r}&(z,s,a+(k-r))d_qa \\
&=(q-1)\sum_{n=1}^\infty\Phi_{k-r}(z,s,q^{-n}+(k-r))q^{-n} \\
&=\frac{q-1}{z^{k-r}}\sum_{n=1}^\infty\frac1{q^{n}}
\sum_{m_{r+1},\ldots,m_k=1}^\infty\frac{z^{m_{r+1}+\dots+m_k}}{(m_{r+1}+\dots+m_k+q^{-n})^{s}}
\\
&=\frac{q-1}{z^{k-r}}\sum_{n=1}^\infty\frac1{q^{n}}
\sum_{m_{r+1},\ldots,m_k=1}^\infty\frac{z^{m_{r+1}+\dots+m_k}}{(m_{r+1}+\dots+m_k)^{s}} \\
&\quad\times\left(1+\frac{ q^{-n}}{m_{r+1}+\dots+m_k}\right)^{-s}
\\
&=\frac{1}{z^{k-r}}\sum_{n=1}^\infty
\sum_{m_{r+1},\ldots,m_k=1}^\infty\frac{z^{m_{r+1}+\dots+m_k}}{(m_{r+1}+\dots+m_k)^{s}} \\
&\quad\times \sum_{l=0}^\infty\binom{-s}{l}\left(\frac{ 1}{m_{r+1}+\dots+m_k}\right)^l q^{-n(l+1)}(q-1) \\
&=\frac{1}{z^{k-r}}\sum_{l=0}^\infty\binom{-s}{l}\frac{{\rm Li}_{k-r}(z,s+l)}{[l+1]_q}
\end{aligned}
$$
for $s\in\mathbb C$ when $|z|<1$ and $\Re(s)>k$ when $|z|=1,$
and  analytically continued to $s\in\mathbb C\setminus\{1,\ldots,k\}$ if  $|z|=1.$
This completes the proof.
\end{proof}

\begin{proof}[\bf{Proof of Theorem \ref{thm2}.}] Now we  prove (\ref{ra1}). Suppose $|z|\leq1$ and $\Re(s)<1.$  From Lemma \ref{thm1}, we have
\begin{equation}\label{ra-eq1}
\begin{aligned}
\int_0^1\Phi_k(z,s,a)d_qa
=\int_0^1\left[\frac1{a^s}+\sum_{r=0}^{k-1}\binom{k}r z^{k-r}\Phi_{k-r}(z,s,a+(k-r))\right]d_qa.
\end{aligned}
\end{equation}
Thus from Lemma \ref{thm2-lem} and the following formula which is valid for $\Re(s)<1$ (e.g., see \cite[p.~50]{KMW}):
$$\int_0^1\frac1{a^s}d_qa
=(q-1)\sum_{n=1}^\infty\left(q^{-n}\right)^{-s}q^{-n}=\frac{1}{[1-s]_q},$$
we have shown (\ref{ra1}) in the region $\Re(s)<1.$

Now we prove (\ref{ra2}). Suppose $s\in\mathbb C$ when $|z|<1$ and $\Re(s)>k$ when $|z|=1.$
As in the prove of Lemma \ref{thm2-lem}, we have
$$
\begin{aligned}
\int_0^1\Phi_{k}&(z,s,k-a)d_qa \\
&=(q-1)\sum_{n=1}^\infty\Phi_{k}(z,s,k-q^{-n})q^{-n} \\
&=(q-1)\sum_{n=1}^\infty\frac1{q^{n}}
\sum_{m_1,\ldots,m_k=0}^\infty\frac{z^{m_{1}+\dots+m_k}}{(m_{1}+\dots+m_k+k-q^{-n})^{s}}  \\
&=\frac{q-1}{z^{k}}\sum_{n=1}^\infty\frac1{q^{n}}
\sum_{m_1,\ldots,m_k=1}^\infty\frac{z^{m_{1}+\dots+m_k}}{(m_{1}+\dots+m_k-q^{-n})^{s}}  \\
&=\frac{q-1}{z^{k}}\sum_{n=1}^\infty\frac1{q^{n}}
\sum_{m_1,\ldots,m_k=1}^\infty\frac{z^{m_{1}+\dots+m_k}}{(m_{1}+\dots+m_k)^{s}} \\
&\quad\times\left(1-\frac{ q^{-n}}{m_{1}+\dots+m_k}\right)^{-s} 
\\
&=\frac1{z^{k}}\sum_{n=1}^\infty
\sum_{m_1,\ldots,m_k=1}^\infty\frac{z^{m_{1}+\dots+m_k}}{(m_{1}+\dots+m_k)^{s}} \\
&\quad\times \sum_{l=0}^\infty(-1)^l\binom{-s}{l}\left(\frac{1}{m_{1}+\dots+m_k}\right)^l q^{-n(l+1)}(q-1) \\
&=\frac1{z^{k}}\sum_{l=0}^\infty(-1)^l\binom{-s}{l}\frac{{\rm Li}_{k}(z,s+l)}{[l+1]_q},
\end{aligned}
$$
so it is valid for $s\in\mathbb C$ when $|z|<1$, and from the analytic continuation it is also valid for all $s\in\mathbb C\backslash\{1,\ldots,k\}$ when $|z|=1$.
This completes the proof of (\ref{ra2}).
The same procedure proves (\ref{ra3}).
\end{proof}

\section{Jackson's integral of multiple gamma functions}
\subsection{Results}
Let $\left[  \begin{smallmatrix}  r \\ i  \end{smallmatrix} \right]$
be the absolute Stirling numbers of the first kind (see, e.g., \cite[Section 1.6]{SC}), defined recursively by
\begin{equation}\label{frist-st}
\left[  \begin{array}{c}  r \\ i  \end{array} \right]=(r-1)\left[  \begin{array}{c}  r-1 \\ i  \end{array} \right]
+\left[  \begin{array}{c}  r-1 \\ i-1  \end{array} \right], \quad
\left[  \begin{array}{c}  r \\ 0  \end{array} \right]=\begin{cases}
1 &\text{if } r=0, \\
0 &\text{if } r\neq0,
\end{cases}
\end{equation}
(see \cite[(12)]{Ad}).

Let $H_{l}:=\sum_{j=1}^{l} \frac1j~(l\in\mathbb{N})$ be the harmonic numbers (see, e.g., \cite[p. 77]{SC}).
By virtue of Theorem \ref{thm2} with $z=1$ and Proposition \ref{m-gama-zet} above, we obtain the following result.

\begin{theorem}\label{thm-gamma}
Let $k$ be an positive integer. Then we have
$$
\begin{aligned}
\int_0^1\log & \Gamma_k(a)d_qa =\sum_{r=1}^k\frac{(-1)^r\log R_{k-r+1}}{(r-1)!}\sum_{i=0}^{r-1}(-1)^{r-i-1}\left[  \begin{array}{c}  r-1 \\ i  \end{array} \right]\frac1{[i+1]_q} \\
&\quad+\frac{q\log q}{q-1}+\sum_{r=0}^{k-1}\binom kr\zeta'_{k-r}(0,k-r) \\
&\quad+\sum_{r=0}^{k-1}\binom kr \sum_{l=1}^{k-r}\frac{(-1)^l}{l[l+1]_q}\biggl[
\sum_{\substack{j=0 \\ j\neq l-1}}^{k-r-1}\frac{P_{j,k-r}(k-r)}{(k-r-1)!}\zeta(l-j,k-r) \\
&\qquad\qquad + \frac{P_{l-1,k-r}(k-r)}{(k-r-1)!}\left(H_{l-1}+\gamma_0(k-r)\right) \biggl] \\
&\quad+\sum_{r=0}^{k-1}\binom kr \sum_{l=k-r+1}^{\infty}\frac{(-1)^l}{l[l+1]_q}\zeta_{k-r}(l,k-r),
\end{aligned}
$$
where
$$P_{j,k}(a)=\sum_{i=j+1}^k(-a)^{i-j-1}\binom{i-1}{j}\left[  \begin{array}{c}  k \\ i  \end{array} \right]$$
and $\left[  \begin{smallmatrix}  k \\ i  \end{smallmatrix} \right]$ is the absolute Stirling numbers of the first kind.
Here, the sum $H_{0}$ is understood to be $0$.
\end{theorem}

The case $k=1$ of Theorem \ref{thm-gamma} reduces to

\begin{corollary}[{Kurokawa, Mimachi and Wakayama \cite[Theorem 3]{KMW}}]\label{cor-thm3}
$$\int_0^1\log \Gamma(a)d_qa = \frac{q\log q}{q-1}-\frac{\gamma}{[2]_q}+\sum_{l=2}^\infty\frac{(-1)^l\zeta(l)}{l[l+1]_q}.$$
\end{corollary}

\subsection{Proof of Theorem \ref{thm-gamma}}\label{thm-ga-diff}

The proof of Theorem \ref{thm-gamma} is based on the following two lemmas.

\begin{lemma}\label{logGk}
We have
$$\begin{aligned}
\int_0^1&\log G_k(a)d_qa =\int_0^1\log\Gamma_k(a)d_qa  \\
&\quad-\sum_{r=1}^k\frac{(-1)^r\log R_{k-r+1}}{(r-1)!}\sum_{i=0}^{r-1}(-1)^{r-i-1}\left[  \begin{array}{c}  r-1 \\ i  \end{array} \right]\frac1{[i+1]_q}.
\end{aligned}$$
\end{lemma}
\begin{proof}
From (\ref{gam-def}) and Proposition \ref{m-gama-zet}, the multiple gamma function $\Gamma_k(a)$ may be expressed by
\begin{equation}
\log\Gamma_k(a)=\log G_k(a)+\sum_{r=1}^k(-1)^r\binom a{r-1}\log R_{k-r+1},
\end{equation}
which is equivalent to
\begin{equation}\label{log-Gk}
\int_0^1\log G_k(a)d_qa =\int_0^1\log\Gamma_k(a)d_qa-\sum_{r=1}^k(-1)^r\frac{\log R_{k-r+1}}{(r-1)!}\int_0^1(a)_{r-1}d_qa,
\end{equation}
where $(a)_r$ denotes the falling factorial power.
Note that
\begin{equation}\label{int-q}
\int_0^1a^id_qa=\frac1{[i+1]_q}
\end{equation}
for $i\geq0$ (see \cite[p.~47]{KMW}) and
\begin{equation}\label{frist-st-pro}
(a)_r=a(a-1)\cdots(a-r+1)=\sum_{i=0}^r (-1)^{r-i}\left[  \begin{array}{c}  r \\ i  \end{array} \right] a^i,
\end{equation}
where $\left[  \begin{smallmatrix}  k \\ i  \end{smallmatrix} \right]$ is  the absolute Stirling numbers of the first kind.
Using (\ref{log-Gk}), (\ref{int-q}) and (\ref{frist-st-pro}), the desired formula follows.
\end{proof}

\begin{lemma}[{Choi \cite[Eq. (2.5)]{Choi}}]\label{mul-zeta-ad}
The multiple zeta function $\zeta_k(s,a)$ defined by (\ref{m-zeta_z=1}) may be expressed by means of the Hurwitz function
$$\zeta_k(s,a)=\frac1{(k-1)!}\sum_{j=0}^{k-1}P_{j,k}(a)\zeta(s-j,a).$$
\end{lemma}

Now we are at the position to prove our main result in this section.
\begin{proof}[\bf{Proof of Theorem \ref{thm-gamma}.}]
From Theorem \ref{thm2} with $z=1,$ we obtain
\begin{equation}\label{zeta-ra1}
\int_0^1 \zeta_k(s,a)d_qa=
\frac{1}{[1-s]_q}+\sum_{r=0}^{k-1}\binom kr \sum_{l=0}^\infty\binom{-s}{l}\frac{\zeta_{k-r}(s+l,k-r)}{[l+1]_q},
\end{equation}
where we have used ${\rm Li}_k(1,s)=\zeta_k(s,k).$

The following identities  will be used to prove the theorem:

\begin{enumerate}
\item $\lim_{s\to0}\left(\frac{\partial \zeta_k(s,a)}{\partial s}\right)=\log G_k(a).$
\item $\lim_{s\to0}\left(\frac{\partial}{\partial s}\frac1{[1-s]_q}\right)=\frac{q\log q}{q-1}.$
\item $\lim_{s\to0}\left(\frac{\partial}{\partial s}\binom{-s}{l}\right)=\lim_{s\to0}\left(\frac{\partial}{\partial s}
\frac{(-s)(-s-1)\cdots(-s-l+1)}{l!}\right)=\frac{(-1)^l}{l}.$
\item From (\ref{st-la}), we have
$$\begin{aligned}\lim_{s\to0}\frac{\partial}{\partial s}\left( s\zeta(s+1,a)\right)
&=\lim_{s\to0}\frac{\partial}{\partial s}\left(1+\sum_{n=0}^\infty\frac{(-1)^n}{n!}\gamma_n(a)s^{n+1}\right) \\
&=\gamma_0(a).
\end{aligned}$$
\item For $1\leq r\leq k,$ we have
$$\lim_{s\to0}\frac{\partial}{\partial s}\left(\binom{-s}{0}\zeta_{r}(s,r)\right)=\zeta'_{r}(0,r).$$
\item Let $1\leq r\leq k.$ By Lemma \ref{mul-zeta-ad} and   (4) above, we have
$$\begin{aligned}
\lim_{s\to0}\frac{\partial}{\partial s}&\left(\binom{-s}{1}\zeta_{r}(s+1,r)\right) \\
&=-\frac1{(r-1)!}\sum_{j=0}^{r-1}P_{j,r}(r)\lim_{s\to0}\frac{\partial}{\partial s}\left(s\zeta(s+1-j,r)\right) \\
&=-\frac1{(r-1)!}\left(P_{0,r}(r)\gamma_0(r)+ \sum_{j=1}^{r-1}P_{j,r}(r)\zeta(1-j,r)\right).
\end{aligned}$$
\item Let $1\leq r\leq k.$ By Lemma \ref{mul-zeta-ad} and the above (4), we have
$$\begin{aligned}
\lim_{s\to0}\frac{\partial}{\partial s}&\left(\binom{-s}{i}\zeta_{r}(s+i,r)\right) \\
&=\frac1{(r-1)!}\sum_{j=0}^{r-1}P_{j,r}(r)\lim_{s\to0}\frac{\partial}{\partial s}\left(\binom{-s}i\zeta(s+i-j,r)\right) \\
&=\frac1{(r-1)!}\frac{(-1)^i}{i}\sum_{\substack{j=0 \\ j\neq i-1}}^{r-1}P_{j,r}(r)\zeta(i-j,r)  \\
&\quad+\frac1{(r-1)!}P_{i-1,r}(r)\frac{(-1)^i}{i}\left(H_{i-1}+\gamma_0(r)\right),
\end{aligned}$$
where we have used
$$\begin{aligned}
\lim_{s\to0}\frac{\partial}{\partial s}&\left(\binom{-s}i\zeta(s+1,r)\right) \\
&=\frac{(-1)^i}{i}\lim_{s\to0}\frac{\partial}{\partial s}\left[
(s+1)\cdots(s+i-1)
\left(1+\sum_{n=0}^\infty\frac{(-1)^n}{n!}\gamma_n(r)s^{n+1}\right)\right] \\
&=\frac{(-1)^i}{i!}\left(H_{i-1}+\gamma_0(r)\right).
\end{aligned}$$
\end{enumerate}

Differentiating both sides of (\ref{zeta-ra1}) and then setting $s=0,$ we find that
\begin{equation}\label{diff-zeta-1}
\begin{aligned}
\int_0^1 \lim_{s\to0}\frac{\partial}{\partial s}&\zeta_k(s,a)d_qa
=\lim_{s\to0}\frac{\partial}{\partial s}\left(\frac{1}{[1-s]_q}\right) \\
&+\sum_{r=0}^{k-1}\binom kr \sum_{l=0}^\infty \frac1{[l+1]_q}
\lim_{s\to0}\frac{\partial}{\partial s}\left[\binom{-s}{l}\zeta_{k-r}(s+l,k-r)\right],
\end{aligned}
\end{equation}
then after using (1)-(7) and replacing $r$ by $k-r,$ we obtain
$$
\begin{aligned}
\int_0^1&\log G_k(a)d_qa =
\frac{q\log q}{q-1}+\sum_{r=0}^{k-1}\binom kr\zeta'_{k-r}(0,k-r) \\
&\quad+\sum_{r=0}^{k-1}\binom kr \sum_{l=1}^{k-r}\frac{(-1)^l}{l[l+1]_q}\biggl[
\sum_{\substack{j=0 \\ j\neq l-1}}^{k-r-1}\frac{P_{j,k-r}(k-r)}{(k-r-1)!}\zeta(l-j,k-r) \\
&\qquad\qquad + \frac{P_{l-1,k-r}(k-r)}{(k-r-1)!}\left(H_{l-1}+\gamma_0(k-r)\right) \biggl] \\
&\quad+\sum_{r=0}^{k-1}\binom kr \sum_{l=k-r+1}^{\infty}\frac{(-1)^l}{l[l+1]_q}\zeta_{k-r}(l,k-r),
\end{aligned}
$$
where the sum $H_{0}$ is understood to be $0$.
Therefore, by Lemma \ref{logGk}, we obtain the desired result.
\end{proof}

\section*{Acknowledgment} The authors are grateful to the anonymous referee for his/her helpful comments.

\bibliography{central}

\end{document}